\documentclass[a4paper]{article}

\pdfoutput=1

\textwidth=125mm 
\textheight=195mm

\usepackage[hyphens]{url}

\usepackage{colonequals}
\usepackage{caption}
\usepackage[position=b]{subcaption}

\usepackage{amsmath, amsthm, amssymb}
% Setup environment styles.
\theoremstyle{plain}
\numberwithin{equation}{section} 
\newtheorem{theorem}[equation]{Theorem}
\newtheorem{proposition}[equation]{Proposition}
\newtheorem{lemma}[equation]{Lemma}
\newtheorem{conjecture}[equation]{Conjecture}

\theoremstyle{definition}
\newtheorem{definition}[equation]{Definition}
\newtheorem{fact}[equation]{Fact}

\newtheoremstyle{dotless}{}{}{}{}{\bfseries}{}{ }{}
\theoremstyle{dotless}

\usepackage{tikz}
\usetikzlibrary{calc}
\usetikzlibrary{shapes.geometric}

\usepackage{listings}
\lstset{
  language=python,
  frame=L,
  aboveskip=3mm,
  belowskip=3mm,
  showstringspaces=false,
  columns=flexible,
  basicstyle={\footnotesize \ttfamily},
  numbers=left,
  numberstyle=\tiny\color{gray},
  keywordstyle=\color{blue},
  commentstyle=\color{green},
  stringstyle=\color{red},
  breaklines=true,
  breakatwhitespace=true,
}

\usepackage{pgfplots}

\DeclareMathOperator{\Mod}{Mod^+}
\DeclareMathOperator{\GL}{GL}
\DeclareMathOperator{\poly}{poly}
\newcommand{\PML}{\mathcal{PML}}
\newcommand{\calL}{\mathcal{L}}
\newcommand{\FF}{\mathbb{F}}
\newcommand{\ZZ}{\mathbb{Z}}
\newcommand{\RP}{\mathbb{RP}}
\newcommand{\QQbar}{\overline{\mathbb{Q}}}
\newcommand{\defeq}{\colonequals}

\newcommand{\superscript}[1]{\ensuremath{^{\textrm{#1}}}}
\newcommand{\nth}{\superscript{th}}
\newcommand{\inlineand}{\quad \textrm{and} \quad}
\newcommand{\inlineQED}{\pushQED{\qed} \qedhere \popQED}

\newcommand\blfootnote[1]{%
  \begingroup
  \renewcommand\thefootnote{}\footnote{#1}%
  \addtocounter{footnote}{-1}%
  \endgroup
}

\DeclareMathOperator{\height}{hgt}
\DeclareMathOperator{\spectralratio}{\omega}

% Simple box plots:
% http://tex.stackexchange.com/questions/84541/simpler-boxplots-in-pgfplots-is-this-possible
\pgfplotsset{
    box plot/.style={
        /pgfplots/.cd,
        black,
        only marks,
        mark=-,
        mark size=\pgfkeysvalueof{/pgfplots/box plot width},
        /pgfplots/error bars/y dir=plus,
        /pgfplots/error bars/y explicit,
        /pgfplots/table/x index=\pgfkeysvalueof{/pgfplots/box plot x index},
    },
    box plot box/.style={
        /pgfplots/error bars/draw error bar/.code 2 args={%
            \draw  ##1 -- ++(\pgfkeysvalueof{/pgfplots/box plot width},0pt) |- ##2 -- ++(-\pgfkeysvalueof{/pgfplots/box plot width},0pt) |- ##1 -- cycle;
        },
        /pgfplots/table/.cd,
        y index=\pgfkeysvalueof{/pgfplots/box plot box top index},
        y error expr={
            \thisrowno{\pgfkeysvalueof{/pgfplots/box plot box bottom index}}
            - \thisrowno{\pgfkeysvalueof{/pgfplots/box plot box top index}}
        },
        /pgfplots/box plot
    },
    box plot top whisker/.style={
        /pgfplots/error bars/draw error bar/.code 2 args={%
            \pgfkeysgetvalue{/pgfplots/error bars/error mark}%
            {\pgfplotserrorbarsmark}%
            \pgfkeysgetvalue{/pgfplots/error bars/error mark options}%
            {\pgfplotserrorbarsmarkopts}%
            \path ##1 -- ##2;
        },
        /pgfplots/table/.cd,
        y index=\pgfkeysvalueof{/pgfplots/box plot whisker top index},
        y error expr={
            \thisrowno{\pgfkeysvalueof{/pgfplots/box plot box top index}}
            - \thisrowno{\pgfkeysvalueof{/pgfplots/box plot whisker top index}}
        },
        /pgfplots/box plot
    },
    box plot bottom whisker/.style={
        /pgfplots/error bars/draw error bar/.code 2 args={%
            \pgfkeysgetvalue{/pgfplots/error bars/error mark}%
            {\pgfplotserrorbarsmark}%
            \pgfkeysgetvalue{/pgfplots/error bars/error mark options}%
            {\pgfplotserrorbarsmarkopts}%
            \path ##1 -- ##2;
        },
        /pgfplots/table/.cd,
        y index=\pgfkeysvalueof{/pgfplots/box plot whisker bottom index},
        y error expr={
            \thisrowno{\pgfkeysvalueof{/pgfplots/box plot box bottom index}}
            - \thisrowno{\pgfkeysvalueof{/pgfplots/box plot whisker bottom index}}
        },
        /pgfplots/box plot
    },
    box plot median/.style={
        /pgfplots/box plot,
        /pgfplots/table/y index=\pgfkeysvalueof{/pgfplots/box plot median index}
    },
    box plot width/.initial=1em,
    box plot x index/.initial=0,
    box plot median index/.initial=1,
    box plot box top index/.initial=2,
    box plot box bottom index/.initial=3,
    box plot whisker top index/.initial=4,
    box plot whisker bottom index/.initial=5,
}

\newcommand{\boxplot}[2][]{
    \addplot [box plot median,#1] table {#2};
    \addplot [forget plot, box plot box,#1] table {#2};
    \addplot [forget plot, box plot top whisker,#1] table {#2};
    \addplot [forget plot, box plot bottom whisker,#1] table {#2};
}

\usepackage[pdfborderstyle={},pdfborder={0 0 0},pagebackref,pdftex]{hyperref}

\renewcommand*{\backref}[1]{}
\renewcommand*{\backrefalt}[4]{
  \ifcase #1 %
   [No citations.]%
  \or
   [#2]%
  \else
   [#2]%
  \fi
}

\author{Mark C. Bell \and Saul Schleimer}

\title{Slow north-south dynamics on $\PML$}

\begin{document}

\maketitle

\begin{abstract}
We consider the action of a pseudo-Anosov mapping class on $\PML(S)$. This action has north-south dynamics and so, under iteration, laminations converge exponentially to the stable lamination.

We study the rate of this convergence and give examples of families of pseudo-Anosov mapping classes where the rate goes to one, decaying exponentially with the word length. Furthermore we prove that this behaviour is the worst possible. \blfootnote{This work is in the public domain.} \blfootnote{Department of Mathematics, University of Illinois: \texttt{mcbell@illinois.edu}}
\blfootnote{Mathematics Institute, University of Warwick: \texttt{s.schleimer@warwick.ac.uk}}
\end{abstract}

\keywords{pseudo-Anosov, laminations, rate of convergence.}

\ccode{37E30, 57M99}

\section{Introduction}

A pseudo-Anosov mapping class $h \in \Mod(S)$ acts on $\PML(S)$ with north-south dynamics. Therefore its action has a pair of fixed points $\calL^{\pm}(h) \in \PML(S)$ and under iteration laminations (other than $\calL^-(h)$) converge to $\calL^+(h)$. A pants decomposition, collection of train tracks or ideal triangulation gives a coordinate system on $\PML(S)$ \cite[Expos\'{e}~6]{FLPtranslated}. In any such system the convergence to $\calL^+(h)$ is exponential.

Thurston suggested that under iteration laminations always converge to $\calL^+(h)$ ``rather quickly''~\cite[Page~427]{Thurston}. If this were true for all pseudo-Anosov mapping classes then iteration would give an efficient algorithm to find $\calL^+(h)$. However it is false:

\begin{theorem}
\label{thrm:main}
Suppose that $3g - 3 + p \geq 4$ and fix a finite generating set for $\Mod(S_{g,p})$. There is an infinite family of pseudo-Anosov mapping classes where the rate of convergence goes to one, and decays exponentially with respect to word length.
\end{theorem}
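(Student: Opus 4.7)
The plan is to express the rate of convergence via the \emph{spectral ratio} $\spectralratio(h) \defeq |\lambda_2(h)|/\lambda(h)$, where $\lambda(h)$ is the dilatation of $h$ and $\lambda_2(h)$ is the modulus of the next largest eigenvalue of the transition matrix of $h$ on an invariant train track. In any train-track coordinate system on $\PML(S)$ this ratio governs the exponential rate at which iterates approach $\calL^+(h)$. Our goal is therefore to produce a family $\{h_n\}$ of pseudo-Anosov classes with $|h_n| = O(n)$ and $1 - \spectralratio(h_n)$ decaying exponentially in $n$.

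Using the hypothesis $3g - 3 + p \geq 4$, I would choose two disjoint essential subsurfaces $Y, Z \subset S_{g,p}$ of positive complexity, together with pseudo-Anosov classes $\phi_Y \in \Mod(Y)$ and $\phi_Z \in \Mod(Z)$ sharing a common dilatation $\lambda > 1$ (after passing to suitable powers if needed). Let $\phi \defeq \phi_Y \phi_Z$; this is a reducible class fixing $\partial Y \cup \partial Z$ with equal stretch factor on its two components. Fix a mapping class $\psi \in \Mod(S)$ that mixes $Y$ and $Z$, for example one for which $\psi(\partial Y)$ essentially intersects both subsurfaces and fills their complement. The family I propose is $h_n \defeq \psi \phi^n$, so that $|h_n| = n|\phi| + |\psi| = O(n)$.

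The first step is to verify that $h_n$ is pseudo-Anosov for all sufficiently large $n$. I would do this by building a train track $\tau$ carrying the stable laminations of $\phi_Y$ and $\phi_Z$, checking that the transition matrix of $h_n$ on $\tau$ becomes Perron--Frobenius once $n$ is large, with positivity forced by the mixing property of $\psi$. The core estimate then proceeds by writing this transition matrix in block form: the diagonal blocks are $\lambda^n$ times the Perron data of $\phi_Y$ and $\phi_Z$, while the off-diagonal blocks, arising from $\psi$, have entries bounded independently of $n$. A Bauer--Fike style perturbation bound then yields two top eigenvalues of size $\lambda^n + O(1)$ that differ from one another by $O(1)$, which gives $\spectralratio(h_n) = 1 - O(\lambda^{-n})$. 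Combined with $|h_n| = O(n)$, this is the required exponential decay in word length.

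The principal obstacle is making the perturbation argument rigorous, since the invariant train track depends on $n$ and the claimed block decomposition is an idealization. The cleanest route I envisage is to fix a train track $\tau_0$ carried by $Y \cup Z$ and analyze the splitting sequence induced by $h_n$ on $\tau_0$, showing that branches iterated by $\phi$ accumulate weight on the order of $\lambda^n$ while those passed through $\psi$ contribute only bounded weight. This should give the required block-plus-small-perturbation structure up to a bounded change of basis, after which the eigenvalue bounds follow routinely and yield Theorem~\ref{thrm:main}.
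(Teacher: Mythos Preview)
Your underlying idea --- two disjoint subsurfaces carrying identical dynamics, mixed by a short element, so that the transition matrix acquires two top eigenvalues of nearly equal modulus --- is precisely the mechanism behind the paper's examples, and the paper says as much (``a pair of identical disjoint subsurfaces supporting much of the dynamics of the mapping class''). But the paper does not attempt a general perturbation argument. Instead it writes down explicit mapping classes on each of the four base surfaces with $3g-3+p=4$; on $S_{3,0}$, for instance, it takes $h=\rho\circ T_c\circ(T_a^{-1}T_b)^k$ with $\rho$ an order-three rotation. It then exhibits a concrete invariant train track, computes the characteristic polynomial of the transition matrix in closed form (with Fibonacci coefficients), proves irreducibility by reduction modulo $7$, and locates all six roots via the substitution $y=x+x^{-1}$ together with Vi\`ete's cubic formula, obtaining $\spectralratio(h)\le 1+14\varphi^{-k}$. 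Surfaces of higher complexity are then reached by lifting these examples to finite covers and by adding or removing punctures at fixed points of suitable powers.

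Your abstract route has gaps beyond the one you acknowledge. Proving that $h_n=\psi\phi^n$ is pseudo-Anosov is not a consequence of some transition matrix becoming Perron--Frobenius; one also needs the track to be large and recurrent and genuinely $h_n$-invariant, which is why the paper instead appeals directly to Penner's construction. More seriously, Bauer--Fike controls the distance from each perturbed eigenvalue to the unperturbed spectrum, but here the unperturbed top eigenvalue has multiplicity two and what you actually need is a bound on the \emph{splitting} of that repeated eigenvalue --- a quantity governed by the restriction of the perturbation to the two-dimensional top eigenspace, in a basis that depends on $n$. Since there is no single train track invariant for all $h_n$, you have no fixed coordinate system in which to assert that the contribution of $\psi$ is $O(1)$, and your final paragraph concedes that this is exactly where the argument stalls. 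The paper's explicit computations sidestep all of these difficulties at the cost of generality.
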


As usual, we use $S_{g,p}$ to denote the surface of genus $g$ with $p$ punctures.

In Section~\ref{sec:upper_bounds} we show how to construct such a family on $S_{3,0}$ and in Section~\ref{sec:other_lower} we show how to generalise this construction to other surfaces. Furthermore, in Proposition~\ref{prop:spectral_ratio_lower} we show that this type of convergence is the worst possible. Finally in Section~\ref{sec:flipper} we describe how these examples can be rigorously verified using \texttt{flipper} \cite{flipper}.

In order to bound the rate of convergence, we use the following definition.

\begin{definition}
Suppose that $f \in \ZZ[x]$ is a polynomial with roots $\lambda_1, \ldots, \lambda_m$, ordered such that $|\lambda_1| \geq |\lambda_2| \geq \cdots \geq |\lambda_m|$. The \emph{spectral ratio} of $f$ is $\spectralratio(f) \defeq |\lambda_1 / \lambda_2|$.
\end{definition}
% We don't worry about the \lambda_2 = 0 case or the case when f only has one root.

This is motivated by the equivalent problem of $\GL(N, \ZZ)$ acting on $\RP^{N-1}$. Under iteration of a matrix $M \in \GL(N, \ZZ)$, generic vectors in $\RP^{N-1}$ converge exponentially to the dominant eigenvector of $M$. The rate of this convergence is bounded above by $\spectralratio(M) \defeq \spectralratio(\chi_M)$, the spectral ratio of the characteristic polynomial of $M$ \cite[Section~4.1]{Watkins}.

\begin{definition}
Suppose that $h \in \Mod(S)$ is a pseudo-Anosov mapping class. Let $\mu_{\lambda(h)} \in \ZZ[x]$ denote the minimal polynomial of its \emph{dilatation} $\lambda(h)$. The \emph{spectral ratio} of $h$ is $\spectralratio(h) \defeq \spectralratio(\mu_{\lambda(h)})$.
\end{definition}

Choose one of the above coordinate systems on $\PML(S)$ and a pseudo-Anosov mapping class $h$.
On a suitable neighbourhood of $\calL^+(h)$, the action of $h$ is given by an integer matrix $M$.
Hence, for a generic lamination $\calL$ the rate of convergence of $h^n(\calL)$ to $\calL^+(h)$ is bounded above by $\spectralratio(M)$.
However, the dominant eigenvalue of $M$ is $\lambda(h)$ and so $\spectralratio(M) \leq \spectralratio(h)$.
Thus we achieve Theorem~\ref{thrm:main} by producing mapping classes with spectral ratio exponentially close to one.

Finally, we conjecture that the surface complexity condition in Theorem~\ref{thrm:main} is not only sufficient but also necessary. If so then this problem is subtly different from the equivalent problem for matrices. In $\GL(N, \ZZ)$ where exponentially slow convergence occurs even when $N = 3$. One such family is given by the matrices
\[ \left( \begin{array}{ccc}
 0 & 0 & 1 \\
 1 & 0 & 2^k \\
 0 & 1 & 0
\end{array} \right) \]
These have word length only $O(k)$, due to distorted subgroups~\cite[Theorem~4.1]{Riley}.

\section{An upper bound by example}
\label{sec:upper_bounds}

% This construction also works if:
%  k \equiv 2 \pmod{8} by reducing modulo 7 to $x^6 + 4x^4 + x^3 + 4x^2 + 1$.
%  k \equiv 2 \pmod{7} by reducing modulo 29 to $x^6 + 26x^4 + 16x^3 + 26x^2 + 1$.

We start by constructing an explicit family of pseudo-Anosov mapping classes on $S_{3,0}$ whose spectral ratio goes to one exponentially with the word length. To do this we use:
\begin{itemize}
\item $\varphi \defeq \frac{1 + \sqrt{5}}{2}$ to denote the \emph{golden ratio},
\item $F_n$ to denote the $n$\nth{} \emph{Fibonacci number}, and
\item $x \approx_t y$ to denote that $|x - y| \leq t$.
\end{itemize}

\begin{figure}[ht]
\centering
\newdimen\R
\R=0.8cm
\begin{tikzpicture}[scale=1.85,thick]
  \draw (30:\R) \foreach \x in {30,90,...,330} {-- (\x:\R)}-- cycle;
  \draw [xshift=2*0.86602540378*\R] (30:\R) \foreach \x in {30,90,...,330} {-- (\x:\R)}-- cycle;
  \draw [xshift=-2*0.86602540378*\R] (30:\R) \foreach \x in {30,90,...,330} {-- (\x:\R)}-- cycle;

  \foreach \x/\y in {60/5,120/4,240/5,300/4} {\node at (\x:\R) {$\y$};};
  \begin{scope}[xshift=2*0.86602540378*\R]
    \foreach \x/\y in {0/1,60/7,120/6,240/7,300/6} {\node at (\x:\R) {$\y$};};
  \end{scope}
  \begin{scope}[xshift=-2*0.86602540378*\R]
    \foreach \x/\y in {60/3,120/2,180/1,240/3,300/2} {\node at (\x:\R) {$\y$};};
  \end{scope}
  
  \foreach \i in {-1,0,1} {
    \begin{scope}[xshift=\i*2*0.86602540378*\R]
      \foreach \x in {30,90,...,330} {\node (A\i N\x) at (\x:\R) {};};
    \end{scope} 
  };
  
  \begin{scope}[xshift=2*0.86602540378*\R]
    \foreach \x/\y in {0/1,60/7,120/6,240/7,300/6} {\node at (\x:\R) {$\y$};};
  \end{scope}
  \begin{scope}[xshift=-2*0.86602540378*\R]
    \foreach \x/\y in {60/3,120/2,180/1,240/3,300/2} {\node at (\x:\R) {$\y$};};
  \end{scope}
  
  \node (astart) at ($(A0N90)!0.5!(A0N150)$) {}; \node (aend) at ($(A0N270)!0.5!(A0N330)$) {};
  \draw [red] (astart.center) -- (aend.center);
  \node [above, right] at ($(astart)!0.75!(aend)$) {$a$};
  
  \node (bstart) at ($(A0N30)!0.5!(A0N90)$) {}; \node (bend) at ($(A0N210)!0.5!(A0N270)$) {};
  \draw [red] (bstart.center) -- (bend.center);
  \node [above, right] at ($(bstart)!0.25!(bend)$) {$b$};
  
  \path [draw, red] ($(A-1N30)!0.25!(A-1N90)$) to [out=-120,in=180]
      ($(A-1N30)!0.33!(A-1N330)$) to [out=0, in=-120]
      ($(A0N30)!0.75!(A0N90)$);
  \path [draw, red] ($(A-1N210)!0.75!(A-1N270)$) to [out=60,in=180] node [above, black] {$c$}
      ($(A-1N330)!0.33!(A-1N30)$) to [out=0, in=60]
      ($(A0N270)!0.75!(A0N210)$);
  
  \draw [->] ($(A0N330) + (0,-0.75)$) -- node [above] {$\rho$} ($(A0N210) + (0,-0.75)$);
\end{tikzpicture}
\caption{Curves on the surface of genus $3$.}
\label{fig:surface}
\end{figure}
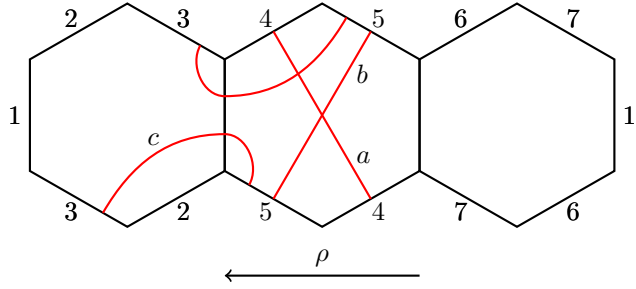

Let $S$ be the surface of genus $3$ as shown in Figure~\ref{fig:surface} in which sides with the same label are identified. Fix $k \geq 7$ such that $k \equiv 2 \pmod{8}$. Consider the mapping class
\[ h \defeq \rho \circ T_c \circ (T_a^{-1} \circ T_b)^k \]
where $T_x$ denotes a right Dehn twist about $x$ and $\rho$ is the order three mapping class which cycles these hexagons to the left.

\begin{theorem}
\label{thrm:spectral_ratio}
The mapping class $h$ is pseudo-Anosov and $\spectralratio(h) \leq 1 + 14 \varphi^{-k}$.
\end{theorem}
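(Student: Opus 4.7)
The overall strategy is to exhibit an invariant train track $\tau$ on $S$, compute the induced transition matrix $M_h$, and then pin down the two largest roots of its characteristic polynomial.

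First, I would construct $\tau$ from the hexagonal decomposition, placing a standard twist track near each of $a$, $b$, $c$ and gluing across the three hexagons in a way permuted combinatorially by $\rho$. Once $h(\tau)$ is carried by $\tau$, the transition matrix factors as $M_h = R \cdot C \cdot (A^{-1} B)^k$, where $R$, $C$, $A$, $B$ are the non-negative integer transition matrices of $\rho$, $T_c$, $T_a$, $T_b$ on the edge weights of $\tau$. Because $a$ and $b$ fill a torus subsurface with intersection number one, $A^{-1} B$ restricts to an $\mathrm{SL}(2, \ZZ)$ matrix of trace three with eigenvalues $\varphi^{\pm 2}$, and off this subsurface $A^{-1} B$ acts as the identity; consequently the entries of $(A^{-1} B)^k$ are Fibonacci numbers growing like $\varphi^{2k}$, while the rest of $M_h$ has uniformly bounded integer entries.

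To confirm that $h$ is pseudo-Anosov I would invoke the standard train track criterion: $M_h$ being Perron--Frobenius with leading eigenvalue greater than one suffices. Positivity of some bounded power $M_h^N$ follows because $\rho$ cycles the three hexagons and $T_c$ couples their boundaries, so after a few iterations every branch is reachable from every other. The hypothesis $k \geq 7$ ensures the Fibonacci entries of $(A^{-1} B)^k$ are strictly positive, and $k \equiv 2 \pmod{8}$ plausibly enforces a parity condition ruling out an accidental invariant sub-track. The dilatation $\lambda(h)$ is then the leading eigenvalue of $M_h$.

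The crux is bounding $\spectralratio(h)$. I would compute $\chi_h(x) = \det(xI - M_h)$ explicitly and look for a factorization $\chi_h(x) = f_k(x) \cdot g(x)$ in which $g$ has roots bounded independently of $k$ and $f_k$ is a low-degree polynomial inheriting the dominant behaviour of the torus block. Since this block has trace $L_{2k} = \varphi^{2k} + \varphi^{-2k}$ and determinant one, its own eigenvalues are $\varphi^{\pm 2k}$; coupling with the bounded matrix $R \cdot C$ perturbs these, and a first-order perturbation argument together with the identity $F_{2k+1} F_{2k-1} - F_{2k}^2 = 1$ should produce two roots of $f_k$ differing by a ratio $1 + O(\varphi^{-k})$, with the explicit constant $14$ emerging once the coupling is tracked.

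The main obstacle is the final algebraic bookkeeping. In particular, one must show that both nearby roots of $\chi_h$ are Galois conjugates of $\lambda(h)$, i.e.\ that they both appear in $\mu_{\lambda(h)}$ rather than being split between distinct irreducible factors of $\chi_h$. This is where the congruence $k \equiv 2 \pmod{8}$ should be essential: a reduction modulo a small prime, combined with the Fibonacci recursion, should prevent $f_k$ from splitting into smaller factors, so that $f_k$ itself divides $\mu_{\lambda(h)}$ and the spectral ratio bound follows directly from the two roots of $f_k$.
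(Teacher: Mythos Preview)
Your overall architecture matches the paper's: exhibit an $h$-invariant train track, compute the transition matrix (whose nontrivial entries are the Fibonacci numbers $F_{2k-1}, F_{2k}, F_{2k+1}$), factor its characteristic polynomial, and use $k \equiv 2 \pmod 8$ to force the relevant factor to be irreducible modulo a small prime (the paper reduces modulo~$7$), so that both large roots genuinely lie in $\mu_{\lambda(h)}$. You have correctly identified the role of the congruence.

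The gap is in your ``crux'' step. The picture of a $2\times 2$ torus block with eigenvalues $\varphi^{\pm 2k}$, perturbed by the bounded factor $R\cdot C$, does not produce the required pair of nearby roots: $R$ is an order-three cyclic permutation of the hexagons, not a small perturbation, and a first-order perturbation of the simple eigenvalue $\varphi^{2k}$ would yield one root near $\varphi^{2k}$, not two of nearly equal modulus. In the paper the characteristic polynomial of the $9\times 9$ hitting matrix factors as $(x^3-1)\,p(x)$ with
\[
p(x) = x^6 - F_{2k}x^4 - F_{2k+3}x^3 - F_{2k}x^2 + 1
\]
palindromic; the substitution $y = x + x^{-1}$ reduces $p$ to the cubic $y^3 - (F_{2k}+3)y - F_{2k+3}$, which is solved explicitly by Vi\`ete's formula. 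Its two outer roots are $y_{\pm 1} \approx_2 \pm\sqrt[4]{5}\,F_k$, of nearly equal magnitude and opposite sign, and each gives one large root of $p$ of magnitude $\approx \sqrt[4]{5}\,F_k \sim \varphi^k$ (not $\varphi^{2k}$). It is the near-coincidence $|y_{-1}| \approx |y_1|$, coming from the palindromic/cubic reduction, that forces $\spectralratio(h) \leq 1 + 6/F_k \leq 1 + 14\varphi^{-k}$; your outline contains no mechanism that produces two roots of nearly equal modulus, so the argument would stall at exactly this point. (A minor difference: the paper certifies that $h$ is pseudo-Anosov by applying Penner's construction to $h^3$, rather than via a Perron--Frobenius check on the hitting matrix.)
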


\begin{proof}
First note that as in \cite[Page~448]{PennerLeastDilatations} it follows immediately from \cite[Theorem~3.1]{PennerConstruction} that $h^3$ is pseudo-Anosov and so $h$ is too.

\begin{figure}[ht]
\centering
\newdimen\R
\R=0.8cm
\begin{tikzpicture}[scale=1.85,thick]

  \draw (30:\R) \foreach \x in {30,90,...,330} {-- (\x:\R)} -- cycle;
  \draw [xshift=2*0.86602540378*\R] (30:\R) \foreach \x in {30,90,...,330} {-- (\x:\R)} -- cycle;
  \draw [xshift=-2*0.86602540378*\R] (30:\R) \foreach \x in {30,90,...,330} {-- (\x:\R)} -- cycle;

  \foreach \x/\y in {60/5,120/4,240/5,300/4} {\node at (\x:\R) {$\y$};};
  \begin{scope}[xshift=2*0.86602540378*\R]
    \foreach \x/\y in {0/1,60/7,120/6,240/7,300/6} {\node at (\x:\R) {$\y$};};
  \end{scope}
  \begin{scope}[xshift=-2*0.86602540378*\R]
    \foreach \x/\y in {60/3,120/2,180/1,240/3,300/2} {\node at (\x:\R) {$\y$};};
  \end{scope}
  
  \foreach \i in {-1,0,1} {
    \begin{scope}[xshift=\i*2*0.86602540378*\R]
      \foreach \x in {30,90,...,330} {\node (A\i N\x) at (\x:\R) {};};
    \end{scope} 
  };
  
  \begin{scope}[xshift=2*0.86602540378*\R]
    \foreach \x/\y in {0/1,60/7,120/6,240/7,300/6} {\node at (\x:\R) {$\y$};};
  \end{scope}
  \begin{scope}[xshift=-2*0.86602540378*\R]
    \foreach \x/\y in {60/3,120/2,180/1,240/3,300/2} {\node at (\x:\R) {$\y$};};
  \end{scope}
  
  \foreach \i in {-1,0,1} {
    \node (astart) at ($(A\i N90)!0.5!(A\i N150)$) {}; \node (aend) at ($(A\i N270)!0.5!(A\i N330)$) {};
    \draw [red] (astart.center) -- (aend.center);
    
    \draw [red] ($(astart.center)!1/7!(aend.center)$) to [out=120, in=240] ($(A\i N90)!1/4!(A\i N30)$);
    \draw [red] ($(astart.center)!2/7!(aend.center)$) to [out=300, in=0] ($(A\i N150)!1/3!(A\i N210)$);
    \draw [red] ($(astart.center)!3/7!(aend.center)$) to [out=120, in=240] ($(A\i N90)!2/4!(A\i N30)$);
    \draw [red] ($(astart.center)!4/7!(aend.center)$) to [out=300, in=60] ($(A\i N210)!2/4!(A\i N270)$);
    \draw [red] ($(astart.center)!5/7!(aend.center)$) to [out=120, in=180] ($(A\i N330)!1/3!(A\i N30)$);
    \draw [red] ($(astart.center)!6/7!(aend.center)$) to [out=300, in=60] ($(A\i N210)!3/4!(A\i N270)$);
    
    \draw [red] ($(A\i N330)!2/3!(A\i N30)$) to [out=180, in=240] ($(A\i N90)!3/4!(A\i N30)$);
    \draw [red] ($(A\i N150)!2/3!(A\i N210)$) to [out=0, in=60] ($(A\i N210)!1/4!(A\i N270)$);
  }; 
\end{tikzpicture}
\caption{The invariant train track $\tau$ of $h$ \cite[Figure~3b]{PennerLeastDilatations}.}
\label{fig:train_track}
\end{figure}

Now the train track $\tau$, shown in Figure~\ref{fig:train_track}, is invariant under $h$. Direct calculation show that the hitting matrix of $\tau$ under $h$ with respect to the basis $a, b, c, \rho(a), \rho(b), \rho(c), \rho^2(a), \rho^2(b), \rho^2(c)$ is:
\[ M \defeq \left( \begin{array}{ccc|ccc|ccc}
0 & 0 & 0 & 0 & 0 & 0 & 1 & 0 & 0 \\
0 & 0 & 0 & 0 & 0 & 0 & 0 & 1 & 0 \\
0 & 0 & 0 & 0 & 0 & 0 & 0 & 0 & 1 \\ \hline
F_{2k+1} & F_{2k} & F_{2k} & 0 & 0 & 0 & 0 & 0 & F_{2k} \\
F_{2k} & F_{2k-1} & F_{2k-1} - 1 & 0 & 0 & 0 & 0 & 0 & F_{2k-1} - 1 \\
F_{2k+1} & F_{2k} & F_{2k} +1 & 1 & 0 & 0 & 0 & 0 & F_{2k} \\ \hline
0 & 0 & 0 & 1 & 0 & 0 & 0 & 0 & 0 \\
0 & 0 & 0 & 0 & 1 & 0 & 0 & 0 & 0 \\
0 & 0 & 0 & 0 & 0 & 1 & 0 & 0 & 0
\end{array} \right) \]
and that the characteristic polynomial of $M$ is
\[ (x^3 - 1) (x^6 - F_{2k} x^4 - F_{2k+3} x^3 - F_{2k} x^2 + 1). \]
Using the fact that $k \equiv 2 \pmod{8}$, reducing the right-hand factor of this modulo $7$ we obtain $x^6 + 4x^4 + x^3 + 4x^2 + 1 \in \FF_{7}[x]$. This is irreducible in $\FF_{7}[x]$ and so the minimal polynomial of $\lambda(h)$ is
\[ \mu_{\lambda(h)}(x) = x^6 - F_{2k} x^4 - F_{2k+3} x^3 - F_{2k} x^2 + 1. \]

To find the roots of $\mu_{\lambda(h)}$, we divide by $x^3$ and substitute $y \defeq x+ x^{-1}$ to obtain:
\[ y^3 - (F_{2k} + 3) y - F_{2k+3}. \]
Let $y_{-1}$, $y_0$ and $y_{1}$ be the three roots of this cubic. These are all real numbers as $\Delta = 4 (F_{2k} + 3)^3 - 27 F_{2k+3}^2> 0$ and using the cubic Vi\`{e}te formula are given by:
\[ y_j \defeq \frac{2}{\sqrt{3}} \sqrt{F_{2k}+3} \cos \left(\frac{1}{3} \arccos \left( \frac{3 \sqrt{3} F_{2k+3}}{(F_{2k} + 3)\sqrt{F_{2k} + 3}} \right) - (j-1) \frac{2\pi}{3} \right) \]
By using the Taylor series for cosine and arccosine together with the fact that $\sqrt{F_{2k} + 3} \approx_1 \sqrt[4]{5} F_k$, we deduce that
\[ y_{-1} \approx_2 -\sqrt[4]{5} F_k, \; y_0 \approx_2 0 \; \textrm{and} \; y_1 \approx_2 \sqrt[4]{5} F_k. \]

Since $y = x+ x^{-1}$, the six roots of $\mu_{\lambda(h)}$ are given by
\[ x_i^{\pm} \defeq \frac{y_i \pm \sqrt{y_i^2 - 4}}{2} \]
Thus for each $i$ either $x_i^+ \approx_1 y_i$ and $x_i^- \approx_1 0$, or vice versa. In particular
\[ x_{-1}^- \approx_3 -\sqrt[4]{5} F_k \inlineand x_1^+ \approx_3 \sqrt[4]{5} F_k. \]
The other four roots of $\mu_{\lambda(h)}$ lie in $B(0, 3)$, the disk about $0$ of radius $3$. So we deduce that the spectral ratio of $h$ is at most the ratio of $|x_{-1}^-|$ and $|x_1^+|$. As these both lie in $X \defeq B(\sqrt[4]{5} F_k, 3)$, we therefore have that
\[ \spectralratio(h) \leq \frac{\max(X)}{\min(X)} \leq \frac{\sqrt[4]{5} F_k + 3}{\sqrt[4]{5} F_k - 3} \leq 1 + \frac{6}{F_{k}} \leq 1 + 14 \varphi^{-k}. \qedhere \]
\end{proof}

\section{Other surfaces}
\label{sec:other_lower}

We also consider the possible spectral ratios of pseudo-Anosov mapping classes on other surfaces. We summarise the results of this section in Table~\ref{tab:tikz_ratios} where:
\begin{itemize}
\item N denotes that there are no pseudo-Anosov mapping classes,
\item B denotes that the spectral ratios of pseudo-Anosov mapping classes are bounded away from one,
\item P$_\leq$ denotes that $\spectralratio(h) \leq 1 + \frac{1}{\poly(|h|)}$ for some infinite family of pseudo-Anosov mapping classes, and
\item E denotes that $\spectralratio(h) \leq 1 + \frac{1}{\exp(|h|)}$ for some infinite family of pseudo-Anosov mapping classes.
\end{itemize}

\begin{table}[ht]
\centering
\begin{tikzpicture}[scale=0.85,thick]

\node at (4.5,0.75) {punctures};
\foreach \i in {0,1,...,7} {\node at (\i+1,0) {$\i$};};
\foreach \i in {0,1,...,3} {\node at (0,-\i-1) {$\i$};};

\node[rotate=90] at (-0.75,-3) {genus};

\draw (0.5,0.5) -- (0.5,-4.5);
\draw (-0.5,-0.5) -- (8.5,-0.5);

\path [draw] (0.5,-1.5) -- (4.5,-1.5) -- (4.5,-0.5);
\node at (2.5,-1) {N};

\path [draw] (0.5,-2.5) -- (2.5,-2.5) -- (2.5,-1.5);
\path [draw] (4.5,-1.5) -- (5.5,-1.5) -- (5.5,-0.5);
\node at (5,-1) {B}; \node at (1.5,-2) {B};

\path [draw] (0.5,-3.5) -- (1.5,-3.5) -- (1.5,-2.5);
\path [draw] (2.5,-2.5) -- (4.5,-2.5) -- (4.5,-1.5);
\path [draw] (5.5,-1.5) -- (7.5,-1.5) -- (7.5,-0.5);
\node at (6.5,-1) {P$_\leq$}; \node at (3.5,-2) {P$_\leq$}; \node at (1,-3) {P$_\leq$};

\node at (5,-3) {E};

\end{tikzpicture}
\caption{Spectral ratios in other surfaces.}
\label{tab:tikz_ratios}
\end{table}

\begin{conjecture}
In all of the P$_\leq$ cases, there is no family of pseudo-Anosov mapping classes whose spectral ratios converge to one exponentially. That is, none of the P$_\leq$ cases are actually E cases.
\end{conjecture}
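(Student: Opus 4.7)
Since this is a conjecture any plan is speculative; here is the route I would attempt. The approach factors into a topological step and an arithmetic step.

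Topologically, for each surface $S = S_{g,p}$ appearing in a $P_\leq$ cell, every pseudo-Anosov $h \in \Mod(S)$ carries an invariant train track $\tau$ whose number of branches depends only on $3g - 3 + p$. Since $\lambda(h)$ is the Perron-Frobenius eigenvalue of the integer transition matrix for $\tau$ under $h$, the minimal polynomial $\mu_{\lambda(h)}$ has degree bounded by some $D(S)$ depending only on $S$. This part is essentially known.

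Arithmetically, one first translates word length into a bound on the height of $\mu_{\lambda(h)}$. Standard estimates for Teichm\"uller translation length, or equivalently a bound on how much a single generator can stretch a train track weight, give $\log \lambda(h) \leq C|h|$ for some $C = C(S)$ depending only on the generating set. Since $\mu_{\lambda(h)}$ has degree at most $D(S)$ and all of its roots are bounded in modulus by $\lambda(h)$, its coefficients (the elementary symmetric functions of the roots) are at most $\exp(C'|h|)$ in absolute value. The goal is then to show that any monic polynomial with integer coefficients, degree at most $D(S)$, height at most $\exp(C'L)$, and Perron-number root structure satisfies $\spectralratio \geq 1 + 1/\poly(L)$.

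The hard part, and the reason this remains only a conjecture, is precisely the last implication. A discriminant argument from $|\mathrm{Disc}(\mu_{\lambda(h)})| \geq 1$ combined with the height bound only yields root separation of order $\exp(-C''L)$, and hence only an exponential, not polynomial, lower bound on $\spectralratio(h) - 1$. Closing the gap requires extra structure special to pseudo-Anosov dilatations: their minimal polynomials are frequently reciprocal, the non-leading Galois conjugates obey the Lind-Perron inequalities, and in small degree the Mahler measure tightly relates $\lambda(h)$ to its other conjugates. I would try to rule out the scenario in which $\lambda(h)$ has a Galois conjugate exponentially close to it by showing that such a coincidence forces $\mu_{\lambda(h)}$ to admit an algebraic factorisation of controlled shape, contradicting the irreducibility of $\mu_{\lambda(h)}$ that arises from the uniqueness of $\calL^{\pm}(h) \in \PML(S)$. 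Carrying this programme out uniformly across the finitely many $P_\leq$ surfaces is the principal obstacle; no such argument is currently available.
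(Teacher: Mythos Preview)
The paper does not prove this statement; it is labelled a conjecture and is followed only by a one-paragraph heuristic. So there is no proof to match, and your proposal is, appropriately, a strategy rather than an argument.

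Your route, however, diverges from the paper's heuristic and faces a concrete obstruction. Your plan is purely arithmetic: bound $\deg \mu_{\lambda(h)}$ by the surface complexity, bound $\height(\mu_{\lambda(h)})$ linearly in $|h|$, and then hope to extract a polynomial lower bound on $\spectralratio(h)-1$. But any inequality depending only on degree and height is already saturated exponentially by the paper's own E-examples in Section~\ref{sec:upper_bounds}: those yield irreducible, reciprocal, Perron polynomials of bounded degree and linearly growing height with $\spectralratio - 1 \in O(\varphi^{-k})$. More pointedly, the introduction exhibits a family in $\GL(3,\ZZ)$ with the same behaviour, so a purely arithmetic argument would apply there too and hence must fail. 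None of the structural features you invoke --- reciprocity, the Perron property, Mahler-measure constraints, irreducibility of $\mu_{\lambda(h)}$ --- distinguishes the $P_\leq$ surfaces from the E surfaces: all of them are present in the degree-six polynomial of Theorem~\ref{thrm:spectral_ratio}.

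The paper's heuristic is geometric rather than arithmetic. It observes that in every exponential example the mapping class is carried by a pair of identical disjoint subsurfaces supporting most of the dynamics, with some topology of $S$ lying outside them; the exponential decay of $\spectralratio - 1$ comes from raising a subsurface map to a high power, which is cheap in word length. The paper then suggests that if such a subsurface configuration is \emph{necessary} for exponential decay, the undistortion of Dehn-twist subgroups in $\Mod(S)$ (Farb--Lubotzky--Minsky) should rule it out on the low-complexity $P_\leq$ surfaces, which have no room for two disjoint nontrivial subsurfaces plus leftover topology. If you pursue the conjecture, this topological mechanism --- not a sharpened root-separation bound --- is the direction the paper points toward.
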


Note that in all of our examples of slow convergence there is a pair of identical disjoint subsurfaces supporting much of the dynamics of the mapping class. Furthermore, some of the topology of $S$ lies outside of these subsurfaces. If these are necessary conditions then, since Dehn twists subgroups are undistorted in $\Mod(S)$ \cite[Theorem~1.1]{FarbLubotzkyMinsky}, the conjecture should follow.

\subsection{Exponential convergence}
\label{sub:exp}

We start by considering the cases where $3g - 3 + p = 4$. Here the same argument as in Theorem~\ref{thrm:spectral_ratio} shows that a similar exponential spectral ratio bound also holds for:
\begin{itemize}
\item $S_{0,7}$ via the (spherical) braid $\sigma_4^{-1} (\sigma_5 \sigma_6^{-1})^k (\sigma_1 \sigma_2^{-1})^k \sigma_3$,
\item $S_{1,4}$ via $T_d^{-1} \circ (T_e \circ T_f^{-1})^k \circ (T_a \circ T_{b}^{-1})^k \circ T_c$, and
\item $S_{2,1}$ via $T_d^{-1} \circ (T_e \circ T_f^{-1})^k \circ (T_a \circ T_b^{-1})^k \circ T_c$.
\end{itemize}
The curves used for $S_{1,4}$ and $S_{2,1}$ are shown in Figure~\ref{fig:S_1_4} and Figure~\ref{fig:S_2_1} respectively.

\begin{figure}[ht]
\centering
\subcaptionbox{Curves on $S_{1,4}$. \label{fig:S_1_4}}{\begin{tikzpicture}[scale=0.75, thick]

\draw [dotted, red] (0, 1.35) to [out=-70, in=180+70] (1.8,1.35);
\draw [dotted, red] (3.95,0) to [out=180,in=-70] (0.2 + 3, 1.35);
\draw [dotted, red] (2.2,0) to [out=0,in=180] (2.9,1.25);

\draw [rounded corners=20] (0,0) rectangle (7, 3);

\draw (-1.5 + 3,1.5) to [out=-30, in=210] (0.5 + 3, 1.5);
\draw (-1.2 + 3,1.35) to [out=30, in=150] (0.2 + 3, 1.35);
% \draw [red] (-0.5 + 3, 1.5) circle (1.5 and 0.65);

\draw [red] (2.9, 1.25) to [out=0,in=270] (3.7, 1.5) to [out=90,in=0] (2.5,2) to [out=180,in=90] (1.3,1.5) to [out=270,in=180] (2.5,1.0) to [out=0,in=270] (3.9,1.5) to [out=90,in=0] (2.5,2.2) to [out=180,in=90] (1.1,1.5) to [out=270,in=180] (2.2,0);

\draw [red] (0, 1.35) to [out=70, in=180-70] (1.8,1.35);
\draw [red] (0.2 + 3, 1.35) to [out=70,in=90] (4.5 + 0.2,1.5) to [out=270,in=0] (3.95,0);

\node at (0.9, 2.1) {$a$};
\node at (2.5, 2.5) {$b$};
\node at (4.125,2.3) {$c$};
\node at (4.875,2.3) {$d$};
\node at (5.625,2.3) {$e$};
\node at (6.375,2.3) {$f$};

%\node [draw,shape=circle,fill=black,scale=0.4] at (3.75, 1.5) {};
\node [draw,shape=circle,fill=black,scale=0.4] at (4.5, 1.5) {};
\node [draw,shape=circle,fill=black,scale=0.4] at (5.25, 1.5) {};
\node [draw,shape=circle,fill=black,scale=0.4] at (6, 1.5) {};
\node [draw,shape=circle,fill=black,scale=0.4] at (6.75, 1.5) {};

\draw [red] (4.875,1.5) circle (0.55 and 0.5);
\draw [red] (5.625,1.5) circle (0.55 and 0.5);
\draw [red] (6.375,1.5) circle (0.55 and 0.5);

\end{tikzpicture}}
\subcaptionbox{Curves on $S_{2,1}$. \label{fig:S_2_1}}{\begin{tikzpicture}[scale=0.75, thick]

\draw [dotted, red] (0, 1.35) to [out=-70, in=180+70] (1.8,1.35);
\draw [dotted, red] (0.2 + 3, 1.35) to [out=270, in=180] (1.8 + 3,0);
\draw [dotted, red] (0.2 + 3, 0) to [out=0, in=270] (1.8 + 3,1.35);
\draw [dotted, red] (6.2, 1.35) to [out=-70, in=180+70] (6.2+1.8,1.35);

\draw [rounded corners=20] (0,0) rectangle (8, 3);

\foreach \i in {1,2}
{
  \draw (-1.5 + 3*\i,1.5) to [out=-30, in=210] (0.5 + 3*\i, 1.5);
  \draw (-1.2 + 3*\i,1.35) to [out=30, in=150] (0.2 + 3*\i, 1.35);
  \draw [red] (-0.5 + 3*\i, 1.5) circle (1.25 and 0.65);
}

\draw [red] (0, 1.35) to [out=70, in=180-70] (1.8,1.35);
%\draw [red] (0.2 + 3, 1.35) to [out=70, in=180-70] (1.8 + 3,1.35);
\draw [red] (0.2 + 3, 1.35) to [out=20, in=45] (1.8 + 3,0);
\draw [red] (0.2 + 3, 0) to [out=135, in=160] (1.8 + 3,1.35);
\draw [red] (6.2, 1.35) to [out=70, in=180-70] (6.2+1.8,1.35);

\node at (0.9, 2.1) {$a$};
\node at (2.5, 2.4) {$b$};
\node at (5.2, 0.4) {$c$};
\node at (2.8, 0.4) {$d$};
\node at (5.5, 2.4) {$e$};
\node at (7.1, 2.1) {$f$};

\node [draw,shape=circle,fill=black,scale=0.4] at (4, 0.4) {};

\end{tikzpicture}}
\caption{Surfaces with exponential convergence.}
\end{figure}
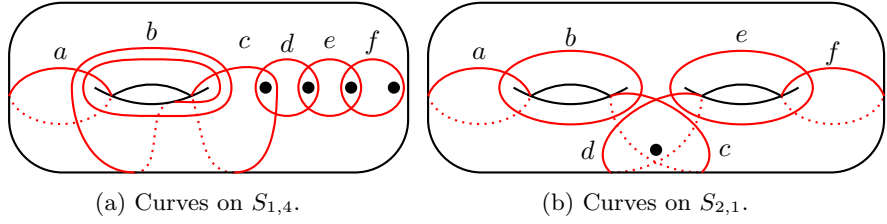

We now deal with surfaces with more punctures. If we have an exponential family on $S_{g,p}$ then by taking a common power we can obtain an additional fixed point. Removing this point gives an exponential family on $S_{g,p+1}$. Conversely, if we have an exponential family on $S_{g,p+1}$ and one of the punctures is a singularity with order at least two (for each mapping class in the family) then we may fill it and obtain an exponential family on $S_{g,p}$.

We now deal with surfaces of higher genus. Note that having an exponential family is preserved under taking covers. Thus if $g > 3$ is odd then $S_{g,0}$ is a cover of $S_{3,0}$ and so we may lift our exponential example from Section~\ref{sec:upper_bounds} to it. On the other hand, if $g > 3$ is even then $S_{g, 2}$ is a cover of $S_{2,2}$. Therefore, after first adding an additional puncture to our $S_{2,1}$ example by the preceding paragraph, we can lift this exponential example to $S_{g, 2}$. Now note that for this lifted family the punctures are both singularities of order at least $g / 2 \geq 2$ and so can be filled. Hence we can construct an exponential family on $S_{g,0}$ in this case too.

\subsection{Polynomial convergence}

When $3g - 3 + p$ is even lower, a polynomial spectral ratio bound still holds for:
\begin{itemize}
\item $S_{0,5}$ via the (spherical) braid $\sigma_3 \sigma_1^k \sigma_4^{-k} \sigma_2^{-1}$,
\item $S_{1,2}$ via $T_{c} \circ T_{a}^k \circ T_{d}^{-k} \circ T_{b}^{-1}$, and
\item $S_{2,0}$ via $T_{c} \circ T_{a}^k \circ T_{d}^{-k} \circ T_{b}^{-1}$.
\end{itemize}
Again, the curves used for $S_{1,2}$ and $S_{2,0}$ are shown in Figure~\ref{fig:S_1_2} and Figure~\ref{fig:S_2_0} respectively.

\begin{figure}[ht]
\centering
\subcaptionbox{Curves on $S_{1,2}$. \label{fig:S_1_2}}{\begin{tikzpicture}[scale=0.75, thick]

\draw [dotted, red] (3, 1.45) to [out=120, in=180] (4.5,3);
\draw [dotted, red] (3, 1.275) to [out=240, in=180] (4.5,0);

\draw [rounded corners=20] (0,0) rectangle (6.75, 3);

\draw (-1.5 + 3,1.5) to [out=-30, in=210] (0.5 + 3, 1.5);
\draw (-1.2 + 3,1.35) to [out=30, in=150] (0.2 + 3, 1.35);

\draw [red] (-0.5 + 3, 1.5) circle (1.25 and 0.65);

\draw [red] (5.25,1.5) to [out=90,in=0] (2.5,2.75) to [out=180,in=90] (0.5,1.5) to [out=270,in=180] (2.5,0.25) to [out=0,in=270] (5.25,1.5);

\draw [red] (4.125,1.5) to [out=90,in=90] (6.375,1.5) to [out=270,in=270] (4.125,1.5);

\draw [red] (4.5,3) to [out=0,in=90] (5.625,1.5) to [out=270,in=0] (4.5,0);
\draw [red] (3, 1.45) to [out=40, in=90] (4.875, 1.5) to [out=270,in=25] (3, 1.275);

\node at (0.25, 1.5) {$a$};
\node at (6.55, 1.5) {$b$};
\node at (5.5, 2.7) {$c$};
\node at (1, 1.5) {$d$};

\node [draw,shape=circle,fill=black,scale=0.4] at (4.5, 1.5) {};
\node [draw,shape=circle,fill=black,scale=0.4] at (6, 1.5) {};

\end{tikzpicture}}
\subcaptionbox{Curves on $S_{2,0}$. \label{fig:S_2_0}}{\begin{tikzpicture}[scale=0.75, thick]

\draw [dotted, red] (0, 1.35) to [out=-70, in=180+70] (1.8,1.35);
\draw [dotted, red] (0.2 + 3, 1.35) to [out=-50, in=180+50] (1.8 + 3,1.35);

\draw [rounded corners=20] (0,0) rectangle (8, 3);

\foreach \i in {1,2}
{
  \draw (-1.5 + 3*\i,1.5) to [out=-30, in=210] (0.5 + 3*\i, 1.5);
  \draw (-1.2 + 3*\i,1.35) to [out=30, in=150] (0.2 + 3*\i, 1.35);
  \draw [red] (-0.5 + 3*\i, 1.5) circle (1.25 and 0.65);
}

\draw [red] (0, 1.35) to [out=70, in=180-70] (1.8,1.35);
\draw [red] (0.2 + 3, 1.35) to [out=70, in=180-70] (1.8 + 3,1.35);

\node at (0.9, 2.1) {$a$};
\node at (2.5, 2.4) {$b$};
\node at (4, 2.1) {$c$};
\node at (5.5, 2.4) {$d$};

\end{tikzpicture}}
\caption{Surfaces with polynomial convergence.}
\end{figure}
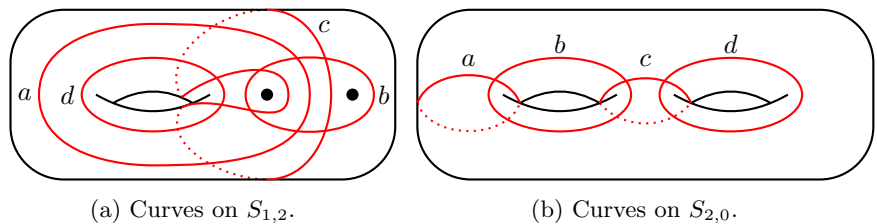

In the case of $S_{0,5}$, for example, if $k > 4$ and $4k + 1$ is not a square then the minimal polynomial of the dilatation of the pseudo-Anosov (spherical) braid $\sigma_3 \sigma_1^k \sigma_4^{-k} \sigma_2^{-1}$ is
\[ 1 - (2k + 5)x + (k^2 + 4k + 8)x^2 - (2k + 5)x^3 + x^4. \]
The same substitution trick allows us to explicitly compute the roots of this polynomial and so determine that the spectral ratio of this mapping classes is at most $1 + \frac{1}{\sqrt{k}}$.

Again, by taking a common power of these families we can obtain additional fixed points. By puncturing these out we can then also obtain a family of pseudo-Anosov mapping classes on $S_{0,6}$, $S_{1,3}$ and $S_{1,4}$ whose spectral ratios converge to one polynomially.

\subsection{No convergence}

For $S_{0,4}$, $S_{1,0}$ and $S_{1,1}$ the dilatation $\lambda$ of a pseudo-Anosov $h$ is a quadratic irrational. Thus $\lambda$ has a single Galois conjugate, its reciprocal, and so $\spectralratio(h) = \lambda^2$. However, since $\lambda + 1/\lambda \geq 3$ \cite[Section~5.1.3]{FarbMargalit} it follows that $\spectralratio(h) \geq \varphi^4 \approx 6.854101\cdots$.

Finally, there are no pseudo-Anosov mapping classes on $S_{0,0}$, $S_{0,1}$, $S_{0,2}$ or $S_{0,3}$.

\section{Lower bounds}
\label{sec:lower_bounds}

In this section we show that the behaviour seen in the previous examples, where the spectral ratio converges to one exponentially with the word length of $h$, is actually the worst possible.

\begin{proposition}
\label{prop:spectral_ratio_lower}
Suppose that $S$ is a surface and that $X$ is a finite generating set for $\Mod(S)$. If $h \in \Mod(S)$ is pseudo-Anosov then
\[ \spectralratio(h) \geq 1 + 2^{- O(|h|)} \]
where $|h|$ denotes the word length of $h$ with respect to $X$.
\end{proposition}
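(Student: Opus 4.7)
The plan is to bound the arithmetic complexity of the minimal polynomial $\mu \defeq \mu_{\lambda(h)}$ in terms of $|h|$, and then to apply a Liouville-type inequality to the algebraic integer $A \defeq \lambda_1^2 - |\lambda_2|^2$, where $\lambda_1 = \lambda(h)$ and $\lambda_2$ is a second-largest root of $\mu$ in absolute value.

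First I would establish that $d \defeq \deg \mu$ is bounded by a constant $N(S)$, and that $|\mu|_\infty \leq 2^{O(|h|)}$. The degree bound is standard: $\lambda(h)$ is an eigenvalue of the action of $h$ on a bounded-dimensional invariant object (a chart of $\PML(S)$, or the transition matrix of an invariant train track). For the height, each generator in $X$ scales train-track weights by a bounded factor, so iteration gives $\lambda(h) \leq C^{|h|}$ for some $C = C(S,X)$; Perron--Frobenius applied to the primitive transition matrix of $h$ then yields that every root $\lambda_i$ of $\mu$ satisfies $|\lambda_i| \leq \lambda_1 \leq C^{|h|}$, so the monic polynomial $\mu$ has coefficients of size at most $2^{O(|h|)}$. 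Primitivity also gives $|\lambda_2| < \lambda_1$ strictly, and hence $A > 0$.

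Next I would construct an auxiliary polynomial whose roots include $A$. Since $\mu \in \mathbb{R}[x]$ the complex conjugate $\overline{\lambda_2}$ is also a root of $\mu$, and so $|\lambda_2|^2 = \lambda_2 \overline{\lambda_2}$ lies in the set $\{\lambda_a \lambda_b : a, b \in \{1, \ldots, d\}\}$. Set
\[ P(z) \defeq \prod_{i, a, b \in \{1, \ldots, d\}} \bigl(z - \lambda_i^2 + \lambda_a \lambda_b\bigr); \]
this is symmetric in the roots of $\mu$, hence lies in $\ZZ[z]$; it is monic of bounded degree $d^3$; and each of its roots has absolute value at most $2(|\mu|_\infty + 1)^2 = 2^{O(|h|)}$, so its coefficients are likewise bounded by $2^{O(|h|)}$. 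By construction $P(A) = 0$. Dividing out the largest power of $z$ gives $\tilde P \in \ZZ[z]$ with $\tilde P(0) \neq 0$, $\tilde P(A) = 0$, and the same degree and height bounds.

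The Liouville inequality applied to the nonzero $A$ annihilated by $\tilde P$ then yields $|A| \geq 1/(\deg \tilde P \cdot |\tilde P|_\infty) \geq 2^{-O(|h|)}$. Using $A = (\lambda_1 - |\lambda_2|)(\lambda_1 + |\lambda_2|)$ together with $\lambda_1 \leq C^{|h|}$, we conclude
\[ \spectralratio(h) - 1 = \frac{\lambda_1 - |\lambda_2|}{|\lambda_2|} \geq \frac{A}{2 \lambda_1^2} \geq 2^{-O(|h|)}. \]
The main obstacle is the construction of $P$: the quantity $|\lambda_2|^2$ is not immediately a symmetric function of the roots of $\mu$, but taking the product over all ordered triples $(i, a, b)$ forces Galois-invariance and hence integrality, at the cost of introducing spurious zero roots that are removed on passing to $\tilde P$. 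The height bound on $\mu$ and the Liouville step are then routine.
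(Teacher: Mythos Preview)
Your argument is correct and shares the same skeleton as the paper's: bound the degree and height of the relevant algebraic data by $O(1)$ and $O(|h|)$ respectively, then apply a Liouville-type separation bound to a nonzero algebraic number measuring the gap between $\lambda_1$ and $|\lambda_2|$. The difference lies in how that algebraic number is handled. The paper works directly with $\alpha = |\lambda_1/\lambda_2| - 1$ and invokes standard height arithmetic (heights are subadditive under sums and products, stable under inversion, and controlled under square roots) to conclude $\height(\alpha)\in O(|h|)$, $\deg(\alpha)\in O(1)$, after which Lemma~\ref{lem:algebraic_approximations} finishes immediately. You instead take $A = \lambda_1^2 - |\lambda_2|^2$ and build by hand an integer annihilating polynomial via the symmetric product $\prod_{i,a,b}(z-\lambda_i^2+\lambda_a\lambda_b)$, then apply the elementary root bound $|A|\geq 1/(\deg\tilde P\cdot|\tilde P|_\infty)$. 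Your route is more self-contained---it avoids quoting the height formalism and makes the integrality transparent---at the cost of the extra resultant construction and the final factorisation $A=(\lambda_1-|\lambda_2|)(\lambda_1+|\lambda_2|)$ to pass back to $\spectralratio(h)-1$. The paper's route is shorter but leans on the (implicit) fact that $|\lambda'|=\sqrt{\lambda'\overline{\lambda'}}$ is algebraic of controlled height and degree. Either way the content is the same Liouville inequality; neither approach yields a sharper bound than the other.
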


To prove this result, we first recall some facts about algebraic numbers.

\begin{definition}[{\cite[Section~3.4]{Waldschmidt}}]
The \emph{height} of a polynomial $f(x) = \sum a_i x^i \in \ZZ[x]$ is
\[ \height(f) \defeq \log(\max(|a_i|)). \]
The \emph{height} of an algebraic number $\alpha \in \QQbar$ is $\height(\alpha) \defeq \height(\mu_\alpha)$ where $\mu_\alpha \in \ZZ[x]$ is its minimal polynomial.
\end{definition}

\begin{fact}
\label{fct:heights}
If $\alpha, \beta \in \QQbar$ are algebraic numbers then:
\begin{itemize}
\item $\height(\alpha \pm \beta) \leq \height(\alpha) + \height(\beta) + 1$ \cite[Property~3.3]{Waldschmidt},
\item $\height(\alpha \beta) \leq \height(\alpha) + \height(\beta)$ \cite[Property~3.3]{Waldschmidt}, and
\item $\height(\alpha^{-1}) = \height(\alpha)$.
\end{itemize}
\end{fact}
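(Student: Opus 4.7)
The plan is to handle the three bullets separately, since each reflects a different algebraic operation on the minimal polynomial.

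The third bullet is essentially immediate. If $\mu_\alpha(x) = \sum_{i=0}^n a_i x^i$ is the minimal polynomial of $\alpha \neq 0$, then $a_0 \neq 0$ (otherwise $x$ would divide $\mu_\alpha$), and $\alpha^{-1}$ is a root of the reciprocal polynomial $\tilde\mu(x) \defeq x^n \mu_\alpha(1/x) = \sum_{i=0}^n a_{n-i} x^i$. Its coefficients are a permutation of those of $\mu_\alpha$, hence $\height(\tilde\mu) = \height(\mu_\alpha)$. A short check confirms that $\tilde\mu$ is (up to sign) the minimal polynomial of $\alpha^{-1}$: the map $f(x) \mapsto x^{\deg f} f(1/x)$ is an involution on primitive integer polynomials of a given degree with nonzero constant term, and it preserves both primitivity and irreducibility over $\ZZ$.

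For the first two bullets, I would exhibit explicit polynomials in $\ZZ[x]$ vanishing at $\alpha \pm \beta$ and $\alpha\beta$ using resultants. Writing $f \defeq \mu_\alpha$ and $g \defeq \mu_\beta$, the polynomials
\[ R_{\pm}(x) \defeq \operatorname{Res}_y\bigl(f(y),\, g(x \mp y)\bigr) \quad \textrm{and} \quad R_\times(x) \defeq \operatorname{Res}_y\bigl(f(y),\, y^{\deg g} g(x/y)\bigr) \]
lie in $\ZZ[x]$ and vanish at $\alpha \pm \beta$ and $\alpha\beta$ respectively. Since passing from a vanishing polynomial to the minimal polynomial of a root can only decrease the height (Gauss's lemma plus the fact that factors in $\ZZ[x]$ of a primitive polynomial have bounded coefficients), it suffices to bound the heights of $R_{\pm}$ and $R_\times$. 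Each resultant is the determinant of a Sylvester matrix, so Hadamard's inequality, combined with the binomial expansion of $(x \mp y)^k$ in the sum/difference case, bounds the coefficients of the resultant in terms of those of $f$ and $g$.

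The main obstacle is converting these raw estimates, which naturally involve factors depending on $\deg f$ and $\deg g$, into the clean degree-free bounds in the statement. In practice this is cleanest via the Mahler measure $M$, which satisfies $M(\alpha\beta) \leq M(\alpha) M(\beta)$ and $M(\alpha + \beta) \leq 2\,M(\alpha) M(\beta)$; one then converts back using the standard comparison estimates between $M(f)$ and $\height(f)$ (the two differ only by a factor polynomial in $\deg f$, and that factor is absorbed into the minimal-polynomial passage above). This is the route taken by Waldschmidt~\cite[Property~3.3]{Waldschmidt}, which I would cite rather than reprove in detail.
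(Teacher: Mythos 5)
The paper does not actually prove this Fact: the first two bullets are delegated to the citation of Waldschmidt and the third is left unproved, so there is no internal argument to measure yours against. Your argument for the third bullet (the reciprocal polynomial $x^{\deg f} f(1/x)$ is, up to sign, the minimal polynomial of $\alpha^{-1}$ and has the same multiset of coefficients) is correct and is the standard one.

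For the first two bullets there is a genuine gap, and it lies partly in your write-up and partly in the literal statement itself. Your reduction rests on the claim that passing from a polynomial vanishing at a number to the minimal polynomial of that number ``can only decrease the height.'' That is false: an irreducible factor of a primitive integer polynomial can have strictly larger coefficients (the cyclotomic factor $\Phi_{105}$ of $x^{105}-1$ has a coefficient $-2$, while $x^{105}-1$ has height $\log 1 = 0$); the correct bound, via the Mahler measure, loses a factor exponential in the degree. More seriously, the clean degree-free inequalities in the Fact hold for the absolute logarithmic Weil height $\tfrac{1}{\deg \alpha}\log M(\mu_\alpha)$ --- which is what Waldschmidt's Property~3.3 concerns --- and not for the quantity $\log \max |a_i|$ of the minimal polynomial as defined in this paper. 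Indeed $\alpha = \beta = \varphi$ is a counterexample to both bullets as literally stated: $\height(\varphi) = \log 1 = 0$, yet $\mu_{\varphi^2}(x) = x^2 - 3x + 1$ gives $\height(\alpha\beta) = \log 3 > 0$, and $\mu_{2\varphi}(x) = x^2 - 2x - 4$ gives $\height(\alpha + \beta) = \log 4 > 1$. The two notions of height differ only by additive terms controlled by the degrees, so the Fact is true up to such corrections, and that is all the proof of Proposition~\ref{prop:spectral_ratio_lower} needs since the degrees there are $O(1)$; but neither your resultant--Hadamard route nor the cited source yields the inequalities exactly as displayed. A clean fix is to state and use the Fact for the Weil height (or to insert explicit $O(\deg)$ error terms), rather than for the height of the Definition preceding it.
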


Most importantly, algebraic numbers of bounded degree and height are bounded away from zero.

\begin{lemma}[{\cite[Lemma~10.3]{BasuPollackRoy}}]
\label{lem:algebraic_approximations}
If $\alpha \neq 0$ then
\[ -\log(|\alpha|) \leq \height(\alpha) + \deg(\alpha). \inlineQED \]
\end{lemma}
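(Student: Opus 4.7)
The plan is to translate the dynamical lower bound into a statement about a nonzero algebraic number of controlled height and bounded degree, to which Lemma \ref{lem:algebraic_approximations} applies directly. Writing $\lambda = \lambda(h)$ and $\mu = \mu_\lambda \in \ZZ[x]$, I will first show that $\height(\mu) = O(|h|)$ while $\deg(\mu) = O_S(1)$, and then transport these bounds to $\spectralratio(h)$ using Fact \ref{fct:heights}.

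For the height bound, fix an $h$-invariant train track $\tau$, which exists since $h$ is pseudo-Anosov. Its transition matrix $M$ is a non-negative integer matrix with $\lambda$ as its Perron--Frobenius eigenvalue, so $\mu$ divides $\chi_M$ in $\ZZ[x]$. The number of branches of $\tau$ is bounded by a topological constant, giving $\deg(\mu) \leq \dim(M) = O_S(1)$. For the coefficient bound I invoke the standard fact that $\log \lambda$ equals the translation length of $h$ on Teichm\"uller space, and that translation length is bounded by $|h|$ times the maximal displacement of a single generator from a fixed basepoint; hence $\log \lambda \leq C_X |h|$ for a constant depending only on $X$. Since $M \geq 0$, every root of $\mu$ is an eigenvalue of $M$ and so has absolute value at most $\lambda \leq e^{O(|h|)}$. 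Writing $\mu = \prod (x - \lambda_i)$ and expanding, each coefficient is an elementary symmetric polynomial in $\deg(\mu) = O_S(1)$ roots each of modulus at most $e^{O(|h|)}$, which gives $\height(\mu) = O(|h|)$.

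For the second stage let $\lambda_2$ be a second-largest root of $\mu$. Since $M$ is primitive---a standard consequence of the north-south dynamics of $h$ and the uniqueness of $\calL^{\pm}(h)$---Perron--Frobenius gives $\lambda > |\lambda_2|$ strictly, so $\spectralratio(h)^2 - 1$ is nonzero. It is algebraic, because $|\lambda_2|^2 = \lambda_2 \overline{\lambda_2}$ is a product of two Galois conjugates of $\lambda_2$ (or two copies of $\lambda_2$ itself, if it is real). A bounded number of applications of Fact \ref{fct:heights} then gives
\[ \height\bigl(\spectralratio(h)^2 - 1\bigr) = O(\height(\mu)) = O(|h|) \inlineand \deg\bigl(\spectralratio(h)^2 - 1\bigr) = O_S(1). \]
Applying Lemma \ref{lem:algebraic_approximations} to this algebraic number yields $\spectralratio(h)^2 - 1 \geq 2^{-O(|h|)}$, from which $\spectralratio(h) \geq \sqrt{1 + 2^{-O(|h|)}} \geq 1 + 2^{-O(|h|)}$ follows immediately after adjusting the constant hidden in the $O(\cdot)$.

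The principal obstacle is the height bound on $\mu$: both the topological dimension bound on an invariant train track and the Teichm\"uller translation-length bound $\log \lambda = O(|h|)$ are standard but must be invoked with some care, and it is precisely at the step passing from root bounds to coefficient bounds that the degree bound $\deg(\mu) = O_S(1)$ gets its essential use. Everything downstream is bookkeeping with Fact \ref{fct:heights} and Lemma \ref{lem:algebraic_approximations}.
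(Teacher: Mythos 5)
There is a fundamental mismatch here: the statement you were asked to prove is Lemma~\ref{lem:algebraic_approximations}, the Liouville-type inequality $-\log(|\alpha|) \leq \height(\alpha) + \deg(\alpha)$ for an arbitrary nonzero algebraic number $\alpha$. What you have written is instead a proof of the downstream result, Proposition~\ref{prop:spectral_ratio_lower}, and your argument explicitly \emph{invokes} Lemma~\ref{lem:algebraic_approximations} as a black box (``Applying Lemma~\ref{lem:algebraic_approximations} to this algebraic number yields\dots''). Nothing in your write-up establishes the lemma itself; the inequality relating $|\alpha|$ to the height and degree of $\alpha$ is simply assumed. In the paper this lemma carries no proof at all --- it is quoted directly from Basu--Pollack--Roy (Lemma~10.3) --- so the only acceptable blind ``proof'' is either that citation or a short self-contained argument. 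A sketch of the latter: writing $\mu_\alpha(x) = \sum_{i=0}^{d} a_i x^i$ with $a_0 \neq 0$ (as $\alpha \neq 0$) and roots $\alpha = \alpha_1, \ldots, \alpha_d$, one has $1 \leq |a_0| = |a_d| \prod_i |\alpha_i|$, and bounding each conjugate by a Cauchy-type root bound in terms of $e^{\height(\alpha)}$ gives $|\alpha| \geq e^{-(\height(\alpha)+\deg(\alpha))}$ after taking logarithms. Your text contains none of this.

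That said, as a blind proof of Proposition~\ref{prop:spectral_ratio_lower} your argument is close in spirit to the paper's: both reduce to producing a nonzero algebraic number of height $O(|h|)$ and bounded degree and then applying the lemma. The paper works with $\alpha = |\lambda/\lambda'| - 1$ where $\lambda'$ is a largest distinct Galois conjugate, and gets nonvanishing from $\lambda$ being a Perron number; you work with $\spectralratio(h)^2 - 1$ and argue via primitivity of a transition matrix, and you supply more detail on why $\height(\mu_\lambda) = O(|h|)$ (via invariant train tracks and the translation-length bound on Teichm\"uller space) than the paper does. But none of that is responsive to the statement actually posed.
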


\begin{proof}[Proof of Proposition~\ref{prop:spectral_ratio_lower}]
Suppose that $h \in \Mod(S)$ is pseudo-Anosov. Let $\lambda = \lambda(h)$ be the dilatation of $h$ and $\lambda'$ a distinct Galois conjugate that maximises $|\lambda'|$. Hence $\spectralratio(h) = |\lambda / \lambda'|$.

Since $S$ and $X$ are fixed, we have that
\[ \height(\lambda), \; \height(\lambda') \in O(|h|) \inlineand \deg(\lambda), \; \deg(\lambda') \in O(1). \]
Now consider $\alpha \defeq |\lambda / \lambda'| - 1$. It follows from Fact~\ref{fct:heights} that
\[ \height(\alpha) \in O(|h|) \inlineand \deg(\alpha) \in O(1). \]

As $\lambda$ is a Perron number \cite[Page~405]{FarbMargalit}, the spectral ratio $\spectralratio(h) > 1$ and so $\alpha \neq 0$. Therefore, by Lemma~\ref{lem:algebraic_approximations} we have that
\[ -\log(|\alpha|) \in O(|h|). \]
Rearranging this we obtain that
\[ \spectralratio(h) \geq 1 + 2^{-O(|h|)} \]
as required.
\end{proof}

\section{Flipper}
\label{sec:flipper}

All examples in this paper were found and verified using the Python package \texttt{flipper}~\cite{flipper}. For example, the following Python script uses \texttt{flipper} to recreate the examples on $S_{0,7}$ that are given in Section~\ref{sub:exp}.

\begin{minipage}[c]{0.95\textwidth}
\lstinputlisting{flipper_braid.py}
\end{minipage}

Random sampling of spectral ratios can also be done using \texttt{flipper}. Such experiments suggest that these exponentially slow examples are actually very rare. One such sampled distribution is shown in Figure~\ref{fig:distribution}. Curiously, the distribution of $\log(\lambda_1) / \log(\lambda_2)$ is essentially flat.

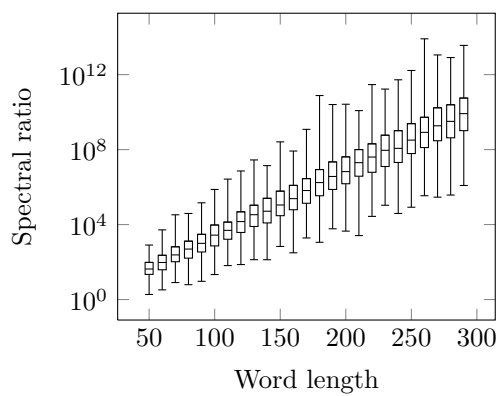
\begin{figure}[ht]
\centering
\begin{tikzpicture}[scale=1]
\begin{axis}[width=6.55cm, ymode=log, xlabel={Word length}, ylabel={Spectral ratio}, box plot width=1.5pt, ytick={1,10000,100000000,1000000000000}]
\boxplot [
  forget plot,
  box plot whisker bottom index=1,
  box plot box bottom index=2,
  box plot median index=3,
  box plot box top index=4,
  box plot whisker top index=5,
  ] {output_500.txt}
\end{axis}
\end{tikzpicture}
\caption{A sampled distribution (500 samples per word length) on $S_{0,7}$.}
\label{fig:distribution}
\end{figure}

\bibliographystyle{plain}
\bibliography{bibliography}

\end{document}